\title[]{Computable quotient presentations of models of arithmetic and set theory}
\author[M.~T.~Godziszewski]{Micha\l{} Tomasz Godziszewski}
 \address[M. T. Godziszewski]{Logic Department, Institute of Philosophy, University of Warsaw, Krakowskie Przedmiescie 3, 00-927 Warszawa}
 \email{mtgodziszewski@gmail.com}
 \urladdr{https://uw.academia.edu/MichalGodziszewski}
\author[Joel David Hamkins]{Joel David Hamkins}
 \address[J. D. Hamkins]
        {Mathematics, Philosophy, Computer Science,
          The Graduate Center of The City University of New York,
          365 Fifth Avenue, New York, NY 10016
          \&
          Mathematics,
          College of Staten Island of CUNY,
          Staten Island, NY 10314}
\email{jhamkins@gc.cuny.edu}
\urladdr{http://jdh.hamkins.org}
\thanks{This article is a preliminary report of results following up research initiated at the conference Mathematical Logic and its Applications, held in memory of Professor Yuzuru Kakuda of Kobe University in September 2016 at the Research Institute for Mathematical Sciences (RIMS) in Kyoto. The second author is grateful for the chance twenty years ago to be a part of Kakuda-sensei's logic group in Kobe, a deeply formative experience that he is pleased to see growing into a lifelong connection with Japan. He is grateful to the organizer Makoto Kikuchi and his other Japanese hosts for supporting this particular research visit, as well as to Bakhadyr Khoussainov for insightful conversations. The first author has been supported by the National Science Centre (Poland) research grant NCN PRELUDIUM UMO-2014/13/N/HS1/02058. He also thanks the Mathematics Program of the CUNY Graduate Center in New York for his research visit as a Fulbright Visiting Scholar between September 2016 and April 2017. Commentary concerning this paper can be made at \href{http://jdh.hamkins.org/computable-quotient-presentations}{http://jdh.hamkins.org/computable-quotient-presentations}.}
\newtheorem{theorem}{Theorem}
\newtheorem*{maintheorem*}{Main Theorem}
\newtheorem*{maintheorems*}{Main Theorems}
\newtheorem{corollary}[theorem]{Corollary}
\newtheorem*{corollary*}{Corollary}
\newtheorem*{corollaries*}{Corollaries}
\newtheorem{lemma}[theorem]{Lemma}
\newtheorem{question}[theorem]{Question}
\newtheorem*{questions*}{Questions}
\newtheorem*{mainquestion*}{Main Question} 
\newtheorem*{openquestion*}{Open Question} 
\newtheorem{observation}[theorem]{Observation}
\newcommand{\QED}{\end{proof}}
\def\proclaim[#1]{{\bf #1}}
\def\BF#1.{{\bf #1.}}
\newcommand{\N}{{\mathbb N}}
\newcommand{\set}[1]{\{\,{#1}\,\}}
\newcommand{\singleton}[1]{\left\{{#1}\right\}}
\newcommand{\smalllt}{\mathrel{\mathchoice{\raise2pt\hbox{$\scriptstyle<$}}{\raise1pt\hbox{$\scriptstyle<$}}{\raise0pt\hbox{$\scriptscriptstyle<$}}{\scriptscriptstyle<}}}
\newcommand{\smallleq}{\mathrel{\mathchoice{\raise2pt\hbox{$\scriptstyle\leq$}}{\raise1pt\hbox{$\scriptstyle\leq$}}{\raise1pt\hbox{$\scriptscriptstyle\leq$}}{\scriptscriptstyle\leq}}}
\newcommand{\boolval}[1]{\mathopen{\lbrack\!\lbrack}\,#1\,\mathclose{\rbrack\!\rbrack}}
\def\[#1]{\boolval{#1}}
\newbox\gnBoxA
\newdimen\gnCornerHgt
\newdimen\gnArgHgt
\def\gcode #1{%
\setbox\gnBoxA=\hbox{$#1$}%
\gnArgHgt=\ht\gnBoxA%
\ifnum     \gnArgHgt<\gnCornerHgt \gnArgHgt=0pt%
\else \advance \gnArgHgt by -\gnCornerHgt%
\fi \raise\gnArgHgt\hbox{\tiny$\ulcorner$} \box\gnBoxA %
\raise\gnArgHgt\hbox{\tiny$\urcorner$}}
\newcommand{\UnderTilde}[1]{{\setbox1=\hbox{$#1$}\baselineskip=0pt\vtop{\hbox{$#1$}\hbox to\wd1{\hfil$\sim$\hfil}}}{}}
\newcommand{\Undertilde}[1]{{\setbox1=\hbox{$#1$}\baselineskip=0pt\vtop{\hbox{$#1$}\hbox to\wd1{\hfil$\scriptstyle\sim$\hfil}}}{}}
\newcommand{\undertilde}[1]{{\setbox1=\hbox{$#1$}\baselineskip=0pt\vtop{\hbox{$#1$}\hbox to\wd1{\hfil$\scriptscriptstyle\sim$\hfil}}}{}}
\newcommand{\UnderdTilde}[1]{{\setbox1=\hbox{$#1$}\baselineskip=0pt\vtop{\hbox{$#1$}\hbox to\wd1{\hfil$\approx$\hfil}}}{}}
\newcommand{\Underdtilde}[1]{{\setbox1=\hbox{$#1$}\baselineskip=0pt\vtop{\hbox{$#1$}\hbox to\wd1{\hfil\scriptsize$\approx$\hfil}}}{}}
\renewcommand{\implies}{\mathrel{\rightarrow}}
\renewcommand{\iff}{\mathrel{\leftrightarrow}}
\def\<#1>{\left\langle#1\right\rangle}
\newcommand{\ZFC}{{\rm ZFC}}
\newcommand{\KP}{{\rm KP}}
\newcommand{\HF}{{\rm HF}}
\newcommand{\PA}{{\rm PA}}
\newcommand{\cell}[1]{\boxit{\hbox to 17pt{\strut\hfil$#1$\hfil}}}
\newcommand{\head}[2]{\lower2pt\vbox{\hbox{\strut\footnotesize\it\hskip3pt#2}\boxit{\cell#1}}}
\newcommand{\boxit}[1]{\setbox4=\hbox{\kern2pt#1\kern2pt}\hbox{\vrule\vbox{\hrule\kern2pt\box4\kern2pt\hrule}\vrule}}
\newcommand{\Col}[3]{\hbox{\vbox{\baselineskip=0pt\parskip=0pt\cell#1\cell#2\cell#3}}}
\newcommand{\tapenames}{\raise 5pt\vbox to .7in{\hbox to .8in{\it\hfill input: \strut}\vfill\hbox to
.8in{\it\hfill scratch: \strut}\vfill\hbox to .8in{\it\hfill output: \strut}}}
\newcommand{\Head}[4]{\lower2pt\vbox{\hbox to25pt{\strut\footnotesize\it\hfill#4\hfill}\boxit{\Col#1#2#3}}}
\newcommand{\Dots}{\raise 5pt\vbox to .7in{\hbox{\ $\cdots$\strut}\vfill\hbox{\ $\cdots$\strut}\vfill\hbox{\
$\cdots$\strut}}}
\newcommand\cplus{\oplus}
\newcommand\ctimes{\odot}
\newcommand\clt{\vartriangleleft}
\newcommand\cleq{\trianglelefteq}
\newcommand\cin{\mathrel{\raise.1ex\hbox{$\epsilon$}}}
\newcommand\ZFfin{\text{\rm ZF}^{\neg\infty}}
\newcommand\Sing{\text{Sing}}
\newcommand\Doub{\text{Doub}}
\begin{document}

\begin{abstract}
We prove various extensions of the Tennenbaum phenomenon to the case of computable quotient presentations of models of arithmetic and set theory. Specifically, no nonstandard model of arithmetic has a computable quotient presentation by a c.e.~equivalence relation. No $\Sigma_1$-sound nonstandard model of arithmetic has a computable quotient presentation by a co-c.e.~equivalence relation. No nonstandard model of arithmetic in the language $\{+,\cdot,\leq\}$ has a computably enumerable quotient presentation by any equivalence relation of any complexity. No model of \ZFC\ or even much weaker set theories has a computable quotient presentation by any equivalence relation of any complexity. And similarly no nonstandard model of finite set theory has a computable quotient presentation.
\end{abstract}
\maketitle

\noindent A \emph{computable quotient presentation} of a mathematical structure $\mathcal A$ consists of a computable structure on the natural numbers $\<\N,\star,\ast,\dots>$, meaning that the operations and relations of the structure are computable, and an equivalence relation $E$ on $\N$, not necessarily computable but which is a congruence with respect to this structure, such that the quotient $\<\N,\star,\ast,\dots>/E$ is isomorphic to the given structure $\mathcal A$. Thus, one may consider computable quotient presentations of graphs, groups, orders, rings and so on, for any kind of mathematical structure. In a language with relations, it is also natural to relax the concept somewhat by considering the \emph{computably enumerable} quotient presentations, which allow the pre-quotient relations to be merely computably enumerable, rather than insisting that they must be computable.

At the 2016 conference Mathematical Logic and its Applications at the Research Institute for Mathematical Sciences (RIMS) in Kyoto, Bakhadyr Khoussainov \cite{Khoussainov2016:Computably-enumerable-structures-domain-independence-rims} outlined a sweeping vision for the use of computable quotient presentations as a fruitful alternative approach to the subject of computable model theory. In his talk, he outlined a program of guiding questions and results in this emerging area. Part of this program concerns the investigation, for a fixed equivalence relation $E$ or type of equivalence relation, which kind of computable quotient presentations are possible with respect to quotients modulo $E$.

In this article, we should like to engage specifically with two conjectures that Khoussainov had made in Kyoto. 

\newtheorem*{conjecture*}{Conjecture}
\begin{conjecture*}[Khoussainov]\label{Conjecture.Khoussainov}\
     \begin{enumerate}
       \item No nonstandard model of arithmetic admits a computable quotient presentation by a computably enumerable equivalence relation on the natural numbers.
       \item Some nonstandard model of arithmetic admits a computable quotient presentation by a co-c.e.~equivalence relation.
     \end{enumerate}
\end{conjecture*}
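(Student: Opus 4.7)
For part (1), my plan is to show that any c.e.~equivalence relation $E$ that is a congruence for a computable arithmetic structure whose quotient is a nonstandard model of $\PA$ must in fact itself be computable; Tennenbaum's theorem then supplies the contradiction. The key observation is that inequality in arithmetic is already $\Sigma_1$-definable, since in Robinson's arithmetic
\[ x \neq y \iff \exists z\,(x = y + z + 1 \vee y = x + z + 1). \]
So, given a computable presentation $\langle \N, \oplus, \odot, \ldots\rangle$ and a c.e.~congruence $E$ with quotient $M \models \PA$, we have $[a]_E = [b]_E$ in $M$ iff $a \mathrel{E} b$, which is c.e., while $[a]_E \neq [b]_E$ in $M$ iff there exists $z \in \N$ with $a \mathrel{E} (b \oplus z \oplus \bar 1)$ or $b \mathrel{E} (a \oplus z \oplus \bar 1)$, also c.e.~since $\oplus$ is computable and $E$ is c.e. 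Hence both $E$ and its complement are c.e., so $E$ is decidable. Choosing least $E$-representatives then yields a computable copy of $M$, contradicting Tennenbaum's theorem.

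For part (2) this symmetry breaks down: if $E$ is co-c.e., equality in $M$ becomes co-c.e.~and the $\Sigma_1$-expansion of inequality in $M$ becomes only $\Sigma^0_2$, so we cannot force $E$ to be computable by the same trick. My plan for refuting (2) under the additional hypothesis of $\Sigma_1$-soundness would be a dual, Rosser-style coding argument: take an effectively inseparable pair $A, B$ of c.e.~sets; use overspill inside $M$ on their $\Sigma_1$-definitions to find a nonstandard element internally separating them on a nonstandard cut; then use the now c.e.~inequality relation of $M$, together with $\Sigma_1$-soundness to align $M$-internal $\Sigma_1$-truth with external $\Sigma_1$-truth on the standard numbers, to extract a c.e.~separator of $A$ and $B$ in $\N$, contradicting effective inseparability.

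The main obstacle I foresee is part (2) without $\Sigma_1$-soundness: in that generality, $M$'s internal $\Sigma_1$-diagram need not agree with the external one even on standard numbers, so the alignment step of my proposed argument fails. It is quite plausible that Khoussainov's conjecture (2) is in fact correct once $\Sigma_1$-soundness is dropped, with a sufficiently wild complete consistent extension of $\PA$ supporting such a quotient presentation via its term model --- so I suspect the sharp boundary between proof and construction for (2) lies exactly at $\Sigma_1$-soundness.
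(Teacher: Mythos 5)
Your part~(1) argument is correct, and it is cleaner and more modular than the paper's primary proof. The paper's main proof of this part reworks Tennenbaum's theorem from scratch: it takes a computably inseparable pair of c.e.\ sets, overspills their enumerations inside the model, codes the resulting internal separator and its complement by prime products $a$ and $b$, and then uses the c.e.-ness of $E$ together with the computability of $\ctimes$ to decide divisibility by simultaneously searching for a witness $x$ with $(x\ctimes\bar p_n)\mathrel{E}a$ or $(x\ctimes\bar p_n)\mathrel{E}b$. You instead observe that $\neq$ is $\Sigma_1$-definable over $\{+,\cdot\}$ via $x\neq y\iff\exists z\,(x=y+z+1\vee y=x+z+1)$, so a c.e.\ congruence for a computable $\{+,\cdot\}$-structure whose quotient models arithmetic has c.e.\ complement, hence is decidable; picking least representatives then hands the contradiction directly to Tennenbaum's theorem. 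This is genuinely different from the paper's main proof. The paper does sketch an alternative of your flavour (its Lemma~\ref{Lemma.Automatic-c.e.-co-c.e.} plus the ensuing discussion), but only in the language with a primitive order symbol $<$ or $\leq$, where the complexity constraint on $E$ is forced syntactically; your observation that the $\Sigma_1$-definition of $\neq$ already yields the same conclusion in the pure $\{+,\cdot\}$ language is a real improvement over that alternative, and arguably the shortest route to the theorem. What the paper's main proof buys in exchange is self-containment and a view into the standard-system phenomenon that powers the later, harder cases.

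On part~(2), your diagnosis that the symmetry breaks when $E$ is co-c.e.\ (inequality becomes $\Sigma_2$) is exactly right, but your proposed refutation under $\Sigma_1$-soundness has a gap that would need a substantially different repair. You aim to extract a \emph{c.e.} separator of an effectively inseparable pair $A,B$ of c.e.\ sets. But with $E$ co-c.e.\ and $\ctimes$ computable, the predicate ``$\bar p_n$ divides $a$'', namely $\exists x\,(x\ctimes\bar p_n)\mathrel{E}a$, is $\Sigma_2$, and so is ``$\bar p_n$ divides $b$''; since $a$ and $b$ code complementary sets, the induced $C=\set{n:\bar p_n\text{ divides }a}$ is $\Delta_2$, i.e.\ $0'$-computable, not c.e. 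That is no contradiction: any disjoint pair of c.e.\ sets (including an effectively inseparable pair) is trivially separated by a $\Delta_2$ set, e.g.\ by $A$ itself. The paper closes this gap by shifting one level up in the arithmetic hierarchy: it takes $A,B$ to be $0'$-c.e.\ and $0'$-computably inseparable, runs the oracle programs $p_A^h,p_B^h$ inside the model with $h$ a code for $0'$, and then the $\Delta_2$ separator $C$ is the contradiction. The role of $\Sigma_1$-soundness (or, as the paper sharpens, the weaker hypothesis that $0'$ lies in the standard system) is precisely to guarantee that such a code $h$ agreeing with $0'$ on standard inputs exists in the model. Your plan to ``align $M$-internal $\Sigma_1$-truth with external $\Sigma_1$-truth'' gestures in this direction, but without relativizing the inseparable pair to the oracle $0'$ the conclusion does not follow. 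Your assessment that the boundary for part~(2) lies near $\Sigma_1$-soundness is nevertheless consonant with the paper, which proves the $\Sigma_1$-sound case (and the $0'$-in-standard-system case, and the $\{+,\cdot,\leq\}$-language case) but explicitly leaves the general co-c.e.\ case over $\{+,\cdot,0,1,<\}$ open.
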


We shall prove the first conjecture and refute several natural variations of the second conjecture, although a further natural variation, perhaps the central case, remains open. In addition, we consider and settle the natural analogues of the conjectures for models of set theory.

Perhaps it will be helpful to mention as background the following observation, amounting to a version of the computable completeness theorem, which identifies a general method of producing computable quotient presentations.

\begin{observation}\label{Observation.Henkin-theory}
  Every consistent c.e.~theory $T$ in a functional language admits a computable quotient presentation by an equivalence relation $E$ of low Turing degree.
\end{observation}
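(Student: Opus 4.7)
The plan is to invoke a Henkin-style construction combined with the low basis theorem. Starting from the consistent c.e.\ theory $T$ in a functional language $L$, I would first expand $L$ by adjoining countably many fresh constants $c_0, c_1, \ldots$, and form the c.e.\ theory $T_H$ obtained from $T$ by adding, for each existential sentence $\exists x\, \varphi(x)$ of the expanded language, a Henkin axiom $\exists x\, \varphi(x) \to \varphi(c_n)$, with a fresh constant $c_n$ assigned to each such sentence through standard bookkeeping. This keeps $T_H$ consistent and c.e., and via Craig's trick one may assume it is computably axiomatized.

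Next, I would apply the low basis theorem. Coding sentences by natural numbers, the set of complete consistent extensions of $T_H$ is a $\Pi^0_1$ class in $2^\omega$, so the low basis theorem produces a complete consistent extension $T^* \supseteq T_H$ of low Turing degree, meaning $(T^*)' \equiv_T 0'$.

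With $T^*$ in hand, I would take the pre-quotient structure to have universe the set of closed terms of the expanded language, computably coded as natural numbers, with each function symbol of $L$ interpreted literally by syntactic term formation, which is plainly computable. The intended congruence is $E(s,t) \iff (s = t) \in T^*$. Because $T^*$ is a deductively closed complete theory, $E \equiv_T T^*$, and so $E$ has low Turing degree. The Henkin witnesses ensure, by the usual induction on formula complexity, that the term-model quotient satisfies $T^*$ and hence $T$, yielding the desired computable quotient presentation.

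The main subtle point, and the reason one cannot generalize the observation to arbitrary languages, is the restriction to \emph{functional} signatures. Any nontrivial relation symbol $R$ would have to be interpreted on the pre-quotient by $\{\bar s : R(\bar s) \in T^*\}$, which is only Turing-equivalent to $T^*$ rather than computable, so it could at best be made c.e.~or co-c.e.~by choosing $T^*$ appropriately. This is precisely the obstruction that the main theorems of the paper exploit when addressing nonstandard models of arithmetic in the relational language $\{+,\cdot,\leq\}$ and models of set theory in the language $\{\in\}$.
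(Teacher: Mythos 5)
Your proposal is correct and follows essentially the same route as the paper: Henkinize the theory, view the complete consistent extensions as a $\Pi^0_1$ class (the paper phrases this as a computable binary tree of attempts), invoke the low basis theorem to obtain a low complete consistent Henkin theory, take the term algebra on the Henkin constants as the computable pre-quotient structure, and define $E$ by provable equality. The only cosmetic difference is that you claim $E \equiv_T T^*$ while the paper only uses $E \leq_T T^*$, which already suffices for lowness; both are fine, and your closing remark about the relational obstruction matches the paper's subsequent discussion.
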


\begin{proof}
Consider any computably enumerable theory $T$ in a functional language (no relation symbols). Let $\tau$ be the computable tree of attempts to build a complete consistent Henkin theory extending $T$, in the style of the usual computable completeness theorem. To form the tree $\tau$, we first give ourselves sufficient Henkin constants, and then add to $T$ all the Henkin assertions $\exists x\,\varphi(x)\implies\varphi(c_\varphi)$. Next, we enumerate all sentences in this expanded language, and then build the tree $\tau$ by adding to $T$ at successive nodes either the next sentence or its negation, provided that no contradiction has yet been realized from that theory by that stage. This tree is computable, infinite and at most binary branching. And so by the low basis theorem, it has a branch of low Turing complexity. Fix such a branch. The assertions made on it provide a complete consistent Henkin theory $T^+$ extending $T$. Let $A$ be the term algebra generated by the Henkin constants in the language of $T$. Thus, the elements of $A$ consist of formal terms in this language with the Henkin constants, and we may code the elements of $A$ with natural numbers. The natural operations on this term algebra are computable: to apply an operation to some terms is simply to produce another term. We may define an equivalence relation $E$ on $A$, by saying that two terms are equivalent $s\mathrel{E} t$, just in case the assertion $s=t$ is in the Henkin theory $T^+$, and this will be a congruence with respect to the operations in the term algebra, precisely because $T^+$ proves the equality axioms. Finally, the usual Henkin analysis shows that the quotient $A/E$ is a model of $T^+$, and in particular, it provides a computable quotient presentation of $T$.
\end{proof}

The previous observation is closely connected with a fundamental fact of universal algebra, namely, the fact that every algebraic structure is a quotient of the term algebra on a sufficient number of generators. Every countable group, for example, is a quotient of the free group on countably many generators, and more generally, every countable algebra (a structure in a language with no relations) arises as the quotient of the term algebra on a countable number of generators. Since the term algebra of a computable language is a computable structure, it follows that every countable algebra in a computable language admits a computable quotient presentation.

One of the guiding ideas of the theory of computable quotients is to take from this observation the perspective that the complexity of an algebraic structure is contained not in its atomic diagram, often studied in computable model theory, but rather solely in its equality relation. The algebraic structure on the term algebra, after all, is computable; what is difficult is knowing when two terms represent the same object. Thus, the program is to investigate which equivalence relations $E$ or classes of equivalence relations can give rise to a domain $\N/E$ for a given type of mathematical structure. There are many open questions and the theory is just emerging.

We should like to call particular attention to the fact that the proof method of observation \ref{Observation.Henkin-theory} and the related observation of univesal algebra breaks down when the language has relation symbols, because the corresponding relation for the resulting Henkin model will not generally be computable on the term algebra or even just on the constants. The complexity of the relation in the quotient structure arises from the particular branch that was chosen through the Henkin tree or equivalently from the Henkin theory itself. So it seems difficult to use the Henkin theory idea to produce computable quotient presentations of relational theories. We shall see later how this relational obstacle plays out in the case of arithmetic, whose usual language $\{+,\cdot,0,1,<\}$ includes a relation symbol, and especially in the case of set theory, whose language $\{\in\}$ is purely relational.

Let us now prove that Khoussainov's first conjecture is true.

\begin{theorem}\label{Theorem.c.e.}
 No nonstandard model of arithmetic has a computable quotient presentation by a c.e.~equivalence relation. Indeed, this is true even in the restricted (but fully expressive) language $\{+,\cdot\}$ with only addition and multiplication: there is no computable structure $\<\N,\cplus,\ctimes>$ and a c.e.~equivalence relation $E$, which is a congruence with respect to this structure, such that the quotient $\<\N,\cplus,\ctimes>/E$ is a nonstandard model of arithmetic.
\end{theorem}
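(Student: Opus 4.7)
The plan is to mimic Tennenbaum's original argument, using a coded approximation inside the model together with the c.e.\ character of $E$ to force an effective separation of two computably inseparable c.e.\ sets. Suppose for contradiction that $M = \<\N,\cplus,\ctimes>/E$ is a nonstandard model of arithmetic with $E$ a c.e.\ congruence. Fix pre-images $\hat 0, \hat 1 \in \N$ of $0^M$ and $1^M$, and let $t_n$ be the $n$-fold sum $\hat 1 \cplus \hat 1 \cplus \cdots \cplus \hat 1$ (with $t_0 = \hat 0$), a computable function of $n$ satisfying $[t_n]_E = n^M$. Choose a pair $A_0, A_1$ of computably inseparable c.e.\ sets defined by $\Sigma_1$ formulas $\varphi_0(x), \varphi_1(x)$ for which $\PA$ proves the disjointness $\forall x\,\neg(\varphi_0(x) \wedge \varphi_1(x))$; for instance, one may take $A_i = \{n : \varphi_n(n) = i\}$.

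A standard prime-coding lemma, provable in $\PA$, yields $\forall n \exists c \forall i < n [p_i \mid c \iff \exists y < n \,\psi_0(i,y)]$, where $\psi_0$ is a bounded kernel of $\varphi_0$. Apply this in $M$ to a nonstandard $n$, and fix $c \in M$ with the stated property together with a pre-image $\hat c \in \N$. For any standard $k$: if $k \in A_0$, then a true standard witness to $\varphi_0(k)$ lies below $n$, so $M \satisfies p_k \mid c$; if $k \in A_1$, then $\Sigma_1$-completeness gives $M \satisfies \varphi_1(t_k)$, and the $\PA$-proved disjointness forces $M \satisfies \neg\varphi_0(t_k)$, hence $M \satisfies p_k \nmid c$. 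The set $D = \{k \in \N : M \satisfies p_k \mid c\}$ therefore separates $A_0$ from $A_1$.

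The decisive step is to show that $D$ is computable, and I do this by showing that both $D$ and its complement are c.e. We have $k \in D$ iff $\exists q \in \N$ with $(q \ctimes t_{p_k}, \hat c) \in E$, a c.e.\ condition in $k$ because $\ctimes$ is computable and $E$ is c.e. For the complement, Euclidean division in $M$ gives that $M \satisfies p_k \nmid c$ iff $c = q \cdot p_k + r$ in $M$ for some $q \in M$ and a nonzero $r < p_k$; since $p_k$ is standard, $\PA$ proves that any such $r$ equals one of $0, 1, \ldots, p_k - 1$, hence is represented by some $t_i$ with $i \in \{1, \ldots, p_k - 1\}$. Thus $k \notin D$ iff $\exists q \in \N$ and $i \in \{1, \ldots, p_k - 1\}$ with $(q \ctimes t_{p_k} \cplus t_i, \hat c) \in E$, again c.e. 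Two disjoint c.e.\ sets that exhaust $\N$ are both computable, so $D$ is a computable separator of $A_0$ and $A_1$, contradicting their computable inseparability.

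The main obstacle, and the precise point where the hypothesis that $E$ is c.e.\ is used, is the enumeration of non-divisibility by a standard prime: what makes it work is the finiteness of the set $\{1, \ldots, p_k - 1\}$ of possible nonzero remainders combined with the c.e.\ character of $E$. For co-c.e.\ $E$ this reduction breaks down, since witnessing a single instance of $E$-equivalence becomes a co-c.e.\ event, which is presumably why the parallel result in the co-c.e.\ setting demands the extra hypothesis of $\Sigma_1$-soundness.
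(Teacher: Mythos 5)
Your proof is correct, and it departs from the paper's argument at two points that are worth noting. First, the paper builds its separating set $\tilde A$ with the ``enumerated first'' trick: run programs $p_A$ and $p_B$ inside the model and keep those $k<c$ that $p_A$ enumerates strictly before $p_B$ does, which guarantees $A\subseteq\tilde A$ and $B\cap\tilde A=\emptyset$ with no further hypotheses on the pair. You instead keep a one-sided trace (those $k<n$ with a witness $y<n$ to $\psi_0(k,y)$) and use a $\PA$-provable disjointness statement together with $\Sigma_1$-completeness to exclude elements of $A_1$; this is fine for the standard pair $A_i=\{n:\varphi_n(n)=i\}$, but it does carry the extra requirement that the disjointness be provable, which the paper's symmetric trick avoids. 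Second, and more substantively, your decisive step is genuinely different from what the paper calls its key idea. The paper introduces a \emph{second} code $b$ for the complement of $\tilde A$ below $c$, and for each $n$ dovetails a search for a witness that $\bar p_n$ divides $a$ against a search for a witness that $\bar p_n$ divides $b$; one must terminate because $a$ and $b$ code complementary sets. You dispense with the complementary code entirely: since $p_k$ is a standard prime, any nonzero remainder of $c$ modulo $p_k$ lies among the standard values $1,\dots,p_k-1$, so non-divisibility is witnessed by a finite disjunction of $E$-facts, each c.e.; hence both $D$ and its complement are c.e.\ and $D$ is computable. This Euclidean-division observation is an elegant substitute for the paper's complementary-code device and is arguably slicker, since it needs only the one coded object. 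Your closing remark on exactly where c.e.-ness of $E$ enters, and why the co-c.e.\ case needs a soundness hypothesis, is also consistent with how the paper handles those variants.
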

%

\begin{proof}
Suppose toward contradiction that $E$ is a computably enumerable equivalence relation on the natural numbers, that $\<\N,\cplus,\ctimes>$ is a computable structure with computable binary operations $\cplus$ and $\ctimes$, that $E$ is a congruence with respect to these operations and that the quotient structure $\<\N,\cplus,\ctimes>/E$ is a nonstandard model of arithmetic. A very weak theory of arithmetic suffices for this argument.

Let $\bar 0$ be a number representing zero in $\<\N,\cplus,\ctimes>/E$ and let $\bar 1$ be a number representing one. Since $\cplus$ is computable, we can computably find numbers $\bar n$ representing the standard number $n$ in $\<\N,\cplus,\ctimes>/E$ simply by computing $\bar n=\bar 1\cplus\cdots\cplus\bar 1$.

Let $A$ and $B$ be computably inseparable c.e.~sets in the standard natural numbers. So they are disjoint c.e.~ sets for which there is no computable set containing $A$ and disjoint from $B$. Fix Turing machine programs $p_A$ and $p_B$ that enumerate $A$ and $B$, respectively. We shall run these programs inside the nonstandard model $\<\N,\cplus,\ctimes>/E$. Although every actual element of $A$ will be enumerated by $p_A$ inside the model at some standard stage, and similarly for $B$ and $p_B$, the programs $p_A$ will also enumerate nonstandard numbers into the sets, and it is conceivable that at nonstandard stages of computation, the program $p_A$ might place standard numbers into its set, even when those numbers are not in $A$. In particular, there is no guarantee in general that the sets enumerated by $p_A$ and $p_B$ in $\<\N,\cplus,\ctimes>/E$ will be disjoint.

Nevertheless, we proceed as follows. In the quotient structure, fix any nonstandard number $c$, and let $\tilde A$ be the set of elements below $c$ that in the quotient structure $\<\N,\cplus,\ctimes>/E$ are thought to be enumerated by $p_A$ before they are enumerated by $p_B$. Since every actual element of $A$ is enumerated by $p_A$ at a standard stage, and not by $p_B$ by that stage, it follows that the elements of $A$ are all in $\tilde A$, in the sense that whenever $n\in A$, then $\bar n$ is in $\tilde A$. Similarly, since the actual elements of $B$ are enumerated by $p_B$ at a standard stage and not by $p_A$ by that stage, it follows that none of the actual elements of $B$ will enter $\tilde A$.
\begin{equation*}
  \begin{split}
     n\in A &\quad\implies\quad \bar n\in \tilde A\\
     n\in B &\quad\implies\quad \bar n\notin\tilde A
  \end{split}
\end{equation*}
Thus, the set $C=\set{n\mid\bar n\in\tilde A}$ contains $A$ and is disjoint from $B$. We shall prove that $C$ is computable.

Since $\tilde A$ is definable inside $\<\N,\cplus,\ctimes>/E$, it is coded by an element of this structure. Let us use the prime-product coding method. Namely, inside the nonstandard model let $p_k$ be the $k^{th}$ prime number, and let $a$ be the product of the $p_k$ for which $k<c$ and $k\in \tilde A$.

Next, the key idea of the proof, we let $b$ be the corresponding code for the complement of $\tilde A$ below $c$. That is, $b$ is the product of the $p_k$ for which $k<c$ and $k\notin\tilde A$. We shall use both $a$ and $b$ to decode the set.

Given any number $n$, we can compute $\bar p_n$ and then search for a number $x$ for which $(x\ctimes \bar p_n)\mathrel{E} a$. In other words, we are searching for a witness that $\bar p_n$ divides $a$, from which we could conclude that $\bar n\in \tilde A$ and so $n\in C$. At the same time, we search for a number $y$ for which $(y\ctimes \bar p_n)\mathrel{E} b$. Such a $y$ would witness that $\bar p_n$ divides $b$ and therefore that $\bar n\notin\tilde A$ and hence $n\notin C$. The main point is that one or the other of these things will happen, since $a$ and $b$ code complementary sets, and so in this way we can compute whether $n\in C$ or not. So $C$ is a computable separation of $A$ and $B$, contrary to our assumption that they were computably inseparable.
\end{proof}

By replacing $x\ctimes\bar p_n$ in the proof with $x\cplus x\cplus\cdots\cplus x$, using $p_n$ many factors, we may deduce the Tennenbaum-style result that if $\<\N,\cplus,\ctimes>/E$ is a nonstandard model of arithmetic and $E$ is c.e., then $\cplus$ is not computable. That is, we don't need both operations in the pre-quotient structure to be computable. Similar remarks will apply to many of the other theorems in this article, and we shall explore this one-operation-at-a-time issue more fully in our follow-up article.

An alternative proof of theorem \ref{Theorem.c.e.} proceeds as follows. Consider the \emph{standard system} of any nonstandard model of arithmetic, which is the collection of traces on the standard $\N$ of the sets that are coded inside the model. Using the prime-product coding, for example, these can be seen as sets of the form $\set{n\mid \bar p_n\text{ divides }a}$, where $a$ is an arbitrary element of the model, $p_n$ means the $n^{th}$ prime number and $\bar p_n$ means the object inside the model that represents that prime number. It is a theorem of Scott that the standard systems of the countable nonstandard models of \PA\ are precisely the countable \emph{Scott} sets, which are sets of subsets of $\N$ that form a Boolean algebra, are closed downward under relative computability, and contain paths through any infinite binary tree coded in them. Because there is a computable tree with no computable path, every standard system must have noncomputable sets and therefore non-c.e.~sets, since it is closed under complements.

For the alternative proof of theorem \ref{Theorem.c.e.}, the main point is that the assumptions of the theorem ensure that every set in the standard system of the quotient model $\<\N,\cplus,\ctimes>/E$ is c.e., contradicting the fact we just mentioned. The reason is that for any object $a$, the number $n$ is in the set coded by $a$ just in case $\bar p_n$ divides $a$, and this occurs just in case there is a number $x$ for which $(x\ctimes\bar p_n)\mathrel{E} a$, which is a c.e.~property since $E$ is c.e.~and $\ctimes$ is computable. So every set in the standard system would be c.e., contrary to the fact we mentioned earlier.

Another alternative proof of a version of theorem \ref{Theorem.c.e.} handles the case of nonstandard models in the full language of arithmetic $\set{+,\cdot,0,1,<}$. Namely, if $E$ is c.e.~and $\<\N,\cplus,\ctimes,\bar 0,\bar 1,\clt>$ is a computably enumerable structure whose quotient by $E$ is a nonstandard model of arithmetic, then it follows from the next lemma that $E$ must also be co-c.e., and hence computable. And once we know that $E$ is computable, we may construct a computable nonstandard model of arithmetic, by using least representatives in each equivalence class, and this would contradict Tennenbaum's theorem, which says that there is no computable nonstandard model of arithmetic.

\begin{lemma}\label{Lemma.Automatic-c.e.-co-c.e.}
Suppose that $E$ is an equivalence relation on the natural numbers. 
\begin{enumerate}
  \item If $E$ is a congruence with respect to a computable relation $\clt$ and the quotient $\<\N,\clt>/E$ is a strict linear order, then $E$ is computable.
  \item If $E$ is a congruence with respect to a c.e. relation $\clt$ and the quotient $\<\N,\clt>/E$ is a strict linear order, then $E$ is co-c.e.
  \item If $E$ is a congruence with respect to a computable relation $\cleq$ and the quotient $\<\N,\cleq>/E$ is a reflexive linear order or  merely an anti-symmetric relation, then $E$ is computable.
  \item If $E$ is a congruence with respect to a c.e. relation $\cleq$ and the quotient $\<\N,\cleq>/E$ is a reflexive linear order $\cleq$ or merely anti-symmetric, then $E$ is c.e.
\end{enumerate}
\end{lemma}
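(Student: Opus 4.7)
The plan is to express $E$ as a simple Boolean combination of the given order relation on $\N$, exploiting the antisymmetry or trichotomy that the quotient is assumed to enjoy. Since $\clt$ (respectively $\cleq$) is a congruence with respect to $E$, it descends to a well-defined relation on $\N/E$, and conversely assertions about the quotient classes pull back to assertions about arbitrary representatives. The heart of the argument is the observation that equality on the quotient is already definable from the order relation, and the complexity of that definition matches the complexity of the pre-quotient relation.

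For clauses (3) and (4), we are handed a relation $\cleq$ whose quotient is antisymmetric. Antisymmetry on the quotient gives $a \mathrel{E} b$ if and only if $a \cleq b$ and $b \cleq a$, since two elements represent the same class in an antisymmetric relation precisely when each lies below the other. Thus $E$ is the conjunction of two instances of $\cleq$: if $\cleq$ is computable, this conjunction is computable, yielding (3); if $\cleq$ is c.e., the conjunction is c.e., yielding (4).

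For clauses (1) and (2), the quotient $\<\N,\clt>/E$ is a strict linear order, so trichotomy holds on classes. Pulled back to representatives this gives $a \mathrel{E} b$ if and only if $\neg(a \clt b)$ and $\neg(b \clt a)$, so $E$ is the conjunction of two complements of $\clt$. If $\clt$ is computable, each complement is computable and hence so is $E$, giving (1); if $\clt$ is c.e., each complement is co-c.e., so the conjunction is co-c.e. and so is $E$, giving (2). I do not anticipate a genuine obstacle here: the argument is a pure unwinding of definitions, and the only care needed is to check that the congruence hypothesis really does let one translate freely between relations on representatives and relations on classes, which it does directly by definition. Note that clauses (3) and (4) do not actually use linearity of the quotient order, only antisymmetry, matching the flexibility advertised in the statement.
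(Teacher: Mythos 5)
Your proof is correct and takes essentially the same approach as the paper: in both cases $E$ (or its complement) is expressed directly as a Boolean combination of two instances of the pre-quotient order relation, using antisymmetry for the $\cleq$ clauses and trichotomy for the $\clt$ clauses. The only cosmetic difference is that for clauses (1) and (2) the paper writes $\neg(x\mathrel{E}y)\iff x\clt y\text{ or }y\clt x$ while you apply De Morgan to phrase the same identity as $x\mathrel{E}y\iff\neg(x\clt y)\wedge\neg(y\clt x)$.
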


\begin{proof}
For statement (1), suppose that $E$ is a congruence with respect to a computable relation $\clt$ and the quotient is a strict linear order. Since the quotient relation obeys 
    $$x\neq y\quad\iff\quad x<y\text{ or }y<x,$$
it follows that
    $$\neg(x\mathrel{E} y)\quad\iff\quad x\clt y\text{ or }y\clt x.$$
Since this latter property is computable, it follows that $E$ is computable. For statement (2), similarly, the latter property is c.e., and so $E$ is co-c.e.

For statement (3), suppose that $E$ is a congruence with respect to a computable relation $\cleq$, whose quotient is anti-symmetric. Since the quotient relation satisfies 
    $$x=y\quad \iff\quad x\leq y\text{ and }y\leq x,$$
it follows that 
    $$x\mathrel{E} y\quad\iff\quad x\cleq y\text{ and }y\cleq x.$$
If $\cleq$ is computable, as in statement (3), then $E$ will be computable. And if $\cleq$ is computably enumerable, as in statement (4), then $E$ must be c.e.
\end{proof}

In particular, including $<$ or $\leq$ in the language of arithmetic and asking for a computable or computably enumerable quotient presentation with respect to $E$ will impose certain complexity requirements on $E$, simply in order that $E$ is a congruence with respect to the order relation. 

Using this idea, the following corollary to theorem \ref{Theorem.c.e.} settles the version of Khoussainov's second conjecture for the language $\{+,\cdot,\leq\}$. By referring to the language of arithmetic with $\leq$, we intend the theory of arithmetic expressed in terms of the natural reflexive order relation, rather than the usual strict order relation $<$.

\begin{corollary}
No nonstandard model of arithmetic in the language $\{+,\cdot,\leq\}$ has a computably enumerable quotient presentation by any equivalence relation, of any complexity. That is, there is no computably enumerable structure $\<\N,\cplus,\ctimes,\cleq>$, where $\cplus$ and $\ctimes$ are computable binary operations and $\cleq$ is a computably enumerable relation, and an equivalence relation $E$ that is a congruence with respect to that structure, such that the quotient $\<\N,\cplus,\ctimes,\cleq>/E$ is a nonstandard model of arithmetic in the language $\{+,\cdot,\leq\}$.
\end{corollary}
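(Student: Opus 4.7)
The plan is to reduce immediately to Theorem \ref{Theorem.c.e.} by using part (4) of Lemma \ref{Lemma.Automatic-c.e.-co-c.e.} to show that the equivalence relation $E$ must in fact be computably enumerable, regardless of how complicated it might have been a priori. So suppose toward contradiction that $\<\N,\cplus,\ctimes,\cleq>$ is a computably enumerable structure, with $\cplus$ and $\ctimes$ computable binary operations and $\cleq$ a c.e.~relation, and that $E$ is an equivalence relation which is a congruence with respect to this structure, such that the quotient $\<\N,\cplus,\ctimes,\cleq>/E$ is a nonstandard model of arithmetic in the language $\{+,\cdot,\leq\}$.

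First I would observe that since the quotient is a model of (a reasonable fragment of) arithmetic in the language with $\leq$, the induced relation on equivalence classes is a reflexive, anti-symmetric linear order. Part (4) of Lemma \ref{Lemma.Automatic-c.e.-co-c.e.} therefore applies: because $\cleq$ is c.e.~and its quotient is anti-symmetric, $E$ itself is computably enumerable. This removes the ``any complexity'' clause and reduces us to the situation already handled.

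Next I would pass to the reduct. The structure $\<\N,\cplus,\ctimes>$ is still a computable structure, $E$ is now known to be c.e., and the quotient $\<\N,\cplus,\ctimes>/E$ is the $\{+,\cdot\}$-reduct of a nonstandard model of arithmetic, hence is itself a nonstandard model of arithmetic in the restricted language $\{+,\cdot\}$ (nonstandardness is preserved because the underlying set of equivalence classes is unchanged and still contains nonstandard elements witnessed by any term unequal to all the $\bar n$). This directly contradicts Theorem \ref{Theorem.c.e.}.

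There is really no obstacle in this argument; all the work was already done in establishing Theorem \ref{Theorem.c.e.} and in Lemma \ref{Lemma.Automatic-c.e.-co-c.e.}. The only conceptual point worth emphasizing is that including $\leq$ as a c.e.~relation in the signature, together with the fact that the quotient order is anti-symmetric, forces $E$ to be c.e.~``for free'', so the would-be extra generality of allowing $E$ of arbitrary complexity collapses back into the hypothesis of Theorem \ref{Theorem.c.e.}.
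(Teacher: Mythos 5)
Your proposal is correct and follows essentially the same route as the paper: invoke part (4) of Lemma \ref{Lemma.Automatic-c.e.-co-c.e.} to conclude that $E$ is c.e.~from the fact that $\cleq$ is c.e.~with anti-symmetric quotient, then pass to the $\{+,\cdot\}$-reduct and apply Theorem \ref{Theorem.c.e.}. The only difference is that you spell out the reduct step and the preservation of nonstandardness explicitly, which the paper leaves implicit.
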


\begin{proof}
Suppose toward contradiction that $E$ is an equivalence relation that is a congruence with respect to computable functions $\cplus$ and $\ctimes$ and c.e. relation $\cleq$ for which the quotient structure $\<\N,\cplus,\ctimes,\cleq>/E$ is a nonstandard model of arithmetic. Because the quotient of $\cleq$ by $E$ is a reflexive linear order, it follows by lemma \ref{Lemma.Automatic-c.e.-co-c.e.} that $E$ must be c.e., and so the corollary follows directly from theorem \ref{Theorem.c.e.}.
\end{proof}

Let's now consider another version of the second conjecture and the case of co-c.e.~equivalence relations. We shall refute the versions of the second conjecture for which the quotient model is to exhibit a certain degree of soundness.

Let's begin with an extreme version of this phenomenon, where we ask for far too much: models of true arithmetic. A model of \emph{true arithmetic} is a model with the same theory as the standard model of arithmetic. Equivalently, it is an elementary extension of the standard model inside it. After ruling out this extreme case, we shall than sharpen the result to the case of $\Sigma_1$-soundness and much less.

\begin{theorem}
 There is no computable structure $\<\N,\cplus,\ctimes>$ and a co-c.e.~equivalence relation $E$, which is a congruence with respect to this structure, such that the quotient $\<\N,\cplus,\ctimes>/E$ is a nonstandard model of true arithmetic.
\end{theorem}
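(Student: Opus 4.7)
The plan is to derive a contradiction by forcing $0''$, the second Turing jump of $\emptyset$, to be $\Delta^0_2$, contrary to the strictness of the arithmetical hierarchy. The argument adapts the standard-systems remark that follows the proof of Theorem~\ref{Theorem.c.e.}, upgraded for co-c.e.~$E$ and exploiting that the quotient models true arithmetic.

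Suppose toward contradiction that $\cplus$ and $\ctimes$ are computable on $\N$, that $E$ is a co-c.e.~congruence with respect to them, and that $M = \langle \N, \cplus, \ctimes\rangle / E$ is a nonstandard model of true arithmetic. As in the proof of Theorem~\ref{Theorem.c.e.}, pick representatives $\bar n = \bar 1 \cplus \cdots \cplus \bar 1$ of each standard natural number $n$, and let $\bar p_n$ denote the representative of the $n$-th prime.

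\emph{Step 1 (coding $0''$ inside $M$).} Fix a $\Sigma^0_2$ formula $\varphi(x)$ defining $0''$ in $\N$. Because $M$ models true arithmetic, $\N$ is an elementary submodel of $M$, so $M \models \varphi(\bar n)$ iff $n \in 0''$ for every standard $n$. Using the full induction available in $M$, overspill produces an element $a \in \N$ and a nonstandard $c \in M$ such that, inside $M$, the element $a$ codes the set $\set{k < c : \varphi(k)}$ via prime products. In particular, for every standard $n$, $M \models \bar p_n \mid a$ if and only if $n \in 0''$.

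\emph{Step 2 (decoding divisibility from $E$).} The predicate $M \models \bar p_n \mid a$ is equivalent to $\exists q \in \N \colon (q \ctimes \bar p_n) \mathrel{E} a$, which is $\Sigma^0_1$ relative to $E$ and hence $\Sigma^0_2$, since $E$ is $\Pi^0_1$ and $\ctimes$ is computable. Dually, because $M \models \PA$, the predicate $M \models \bar p_n \nmid a$ is equivalent to the existence of a nonzero standard remainder $r \in \set{1, \ldots, p_n - 1}$ and a quotient $q \in \N$ with $(q \ctimes \bar p_n) \cplus \bar r \mathrel{E} a$; this is a finite disjunction, indexed computably in $n$, of $\Sigma^0_2$ conditions, and so is itself $\Sigma^0_2$. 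Combining with Step 1, both $0''$ and its complement are $\Sigma^0_2$, so $0'' \in \Delta^0_2$. But $0''$ is $\Sigma^0_2$-complete and properly so, hence not $\Delta^0_2$, a contradiction.

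The most delicate point is Step 1: the overspill invocation must deliver a single coding element $a$ that correctly records membership in $0''$ across all standard indices simultaneously. True arithmetic — specifically the combination of $\N \preceq M$ with full PA induction in $M$ — supplies exactly what is needed, since PA proves that for each $y$ there exists a prime-product coding the $\varphi$-set up to $y$, and overspill then provides such an $a$ at a nonstandard bound. The remainder is a routine complexity-hierarchy comparison, parallel to the c.e.~case where every set in $\SSy(M)$ was shown to be c.e.; here the bound slips from c.e.~to $\Delta^0_2$, and true arithmetic raises the ceiling on the standard system enough ($0'' \in \SSy(M)$) to still beat it.
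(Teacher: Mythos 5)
Your proof is correct, and it takes a recognizably different route from the paper's official one. Where the paper codes $0'$ into the standard system, fixes $0'$-computably inseparable c.e.$[0']$ sets $A,B$, runs their enumerations up to a nonstandard stage, and extracts a $0'$-computable separating set from dual codes $a$ and $b$ for a set and its complement, you instead code $0''$ directly via a $\Sigma^0_2$ formula and show both $n\in 0''$ and $n\notin 0''$ are $\Sigma^0_2$ as read off from a single coding element $a$, collapsing $0''$ into $\Delta^0_2$. The paper itself flags your kind of argument in the remark immediately after: ``we could also have given a more direct alternative proof just by extracting higher-order arithmetic truths from this model in a $\Sigma_2$ or even $\Delta_2$-manner.'' Your remainder trick for non-divisibility --- expressing $M\models \bar p_n\nmid a$ as $\exists r\,\exists q\,((q\ctimes\bar p_n)\cplus\bar r\mathrel{E}a)$ with $1\le r<p_n$ --- is a clean way to avoid the paper's dual coding with a second element $b$, and is sound: the remainder $r$ is necessarily standard because $p_n$ is, so the bounded disjunction is computably indexed and the whole predicate stays $\Sigma^0_2$. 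The one thing each approach ``buys'' is worth noting. Your argument genuinely needs the model to be correct about $\Sigma^0_2$ truths (so that $a$ really codes $0''$ on the standards), and so it consumes more of the true-arithmetic hypothesis than the paper's proof does; the paper's proof only needs $0'$ in the standard system, which is exactly why it weakens cleanly to the sharper Theorem~\ref{Theorem.No-0'-by-co-c.e.} ($\Sigma_1$-sound, or merely $0'\in\SSy$). Your argument, as written, does not directly relativize to that weaker hypothesis, though of course for the theorem as stated that is no defect. Minor stylistic point: you invoke overspill to obtain $a$ and $c$, but overspill is not really needed --- PA proves the relevant prime-product code exists for every $y$, so one may simply instantiate at any nonstandard $c$.
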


\begin{proof}
Suppose that $\<\N,\cplus,\ctimes>$ is a computable structure and $E$ is a co-c.e.~equivalence relation, a congruence with respect to this structure, whose quotient $\<\N,\cplus,\ctimes>/E$ is a nonstandard model of true arithmetic. As in the earlier proof, let $\bar 1$ be a representative of the number $1$ inside this model and let $\bar n$ be the result of adding $\bar 1$ to itself $n$ times with $\cplus$ inside the model, so that $\bar n$ is a representative for what the quotient model thinks is the standard number $n$.

Since the quotient model satisfies true arithmetic, it follows that it is correct about the halting problem on standard numbers. So there is a number $h$ that codes the halting problem up to some nonstandard length $c$ of computations. In particular, for standard $n$ we shall have that $n\in 0'$ if and only if $\bar n$ is in the set coded by $h$. Another way to say this is that $0'$ is in the standard system of the quotient model, and this is all we actually require of true arithmetic here.

Let $A$ and $B$ be $0'$-computably inseparable sets, that is, sets that are computably enumerable relative to an oracle for the halting problem $0'$, but there is no $0'$-decidable separating set. Let $p_A$ and $p_B$ be the programs that enumerate $A$ and $B$ from an oracle for $0'$. Inside the nonstandard model $\<\N,\cplus,\ctimes>/E$, we may run $p_A$ and $p_B$ with the oracle determined by $h$, which happens to agree with $0'$ on the standard numbers. In particular, on standard input $n$, the computation with oracle $h$ inside the model will agree at the standard stages of computation with the actual computation using the real oracle $0'$.

Let $\tilde A$ be the elements $k<c$ that are enumerated by $p_A^h$ before they are enumerated by $p_B^h$. As before, our assumptions ensure that every actual element of $A$ is in $\tilde A$, and no element of $B$ is in $\tilde A$.
\begin{equation*}
  \begin{split}
     n\in A &\quad\implies\quad \bar n\in \tilde A\\
     n\in B &\quad\implies\quad \bar n\notin\tilde A
  \end{split}
\end{equation*}
Thus, the set $C$ of standard $n$ for which $\bar n\in\tilde A$ is a set that contains $A$ and is disjoint from $B$.

It remains for us to show for the contradiction that $C$ is computable from $0'$. As before, inside the quotient model, let $a$ be the product of $p_k$ for $k$ in $\tilde A$, and let $b$ be the product of $p_k$ for $k$ not in $\tilde A$. Given $n$, we want to determine whether $n\in C$ or not, which is equivalent to $\bar n\in\tilde A$. We can compute $\bar p_n$, and then we can try to discover if $\bar p_n$ divides $a$ or $\bar p_n$ divides $b$. Note that $\bar p_n$ divides $a$ just in case $\exists x\ (x\ctimes \bar p_n)\mathrel{E}a$, which has complexity $\Sigma_2$, since E is $\Pi_1$. Similarly, the relation $\bar p_n$ divides $b$ is also $\Sigma_2$. But since these answers are opposite, it follows that both of these relations are $\Delta_2$, and hence computable from $0'$. So the relation $n\in C$ is computable from $0'$, and we have therefore found a $0'$-computable separating set $C$, contradiction our assumption that $A$ and $B$ were $0'$-computably inseparable.
\end{proof}

We could alternatively have argued as in the alternative proof of theorem \ref{Theorem.c.e.} that every element of the standard system of the model is computable from $0'$, which is a contradiction if one knows that $0'$ is in the standard system.

Of course, true arithmetic was clearly much too strong in this theorem, and we could also have given a more direct alternative proof just by extracting higher-order arithmetic truths from this model in a $\Sigma_2$ or even $\Delta_2$-manner, since the pre-quotient model is computable and the relation is co-c.e. So a better theorem will eliminate or significantly weaken the true-arithmetic hypothesis, as we do in the following sharper result.

\begin{theorem}\label{Theorem.No-0'-by-co-c.e.}
There is no computable structure $\<\N,\cplus,\ctimes>$ and a co-c.e.~equivalence relation $E$, which is a congruence with respect to this structure, such that the quotient $\<\N,\cplus,\ctimes>/E$ is a $\Sigma_1$-sound nonstandard model of arithmetic, or even merely a nonstandard model of arithmetic with $0'$ in the standard system of the model.
\end{theorem}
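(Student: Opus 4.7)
The plan is to carry over the structure of the previous theorem, but to weaken the appeal from true arithmetic down to the single input ``$0'$ lies in the standard system of the quotient model.'' First I would check that the $\Sigma_1$-sound case reduces to the standard-system case: by $\Sigma_1$-soundness together with $\Sigma_1$-completeness of any model of a weak arithmetic, the quotient decides the halting relation correctly on standard inputs, so coding the internal halting relation up to any nonstandard length $c$ inside $\<\N,\cplus,\ctimes>/E$ produces an element $h$ whose standard trace is precisely $0'$. Therefore it suffices to derive a contradiction from the weaker hypothesis that $0'$ is in the standard system.

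So assume $0'$ is in the standard system, and fix an element $h$ coding a set of the quotient model whose intersection with the standard natural numbers (identified with the $\bar n$) is exactly $0'$. Fix $0'$-computably inseparable sets $A,B$, that is, disjoint $0'$-c.e.\ sets with no $0'$-computable separator, and fix oracle programs $p_A,p_B$ that enumerate $A,B$ from oracle $0'$. Inside the quotient model, run $p_A$ and $p_B$ with oracle $h$ up to the nonstandard stage $c$, and let $\tilde A$ be the set of $k<c$ that the model sees enumerated by $p_A^h$ strictly before $p_B^h$. Because $h$ and $0'$ agree on the standard numbers and because every standard enumeration of $n\in A$ or $n\in B$ terminates at a standard stage using only standard oracle queries, we get $n\in A\implies\bar n\in\tilde A$ and $n\in B\implies\bar n\notin\tilde A$, so $C:=\set{n\mid \bar n\in\tilde A}$ separates $A$ from $B$.

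To bound the complexity of $C$, I would repeat the prime-product trick of the proof of Theorem~\ref{Theorem.c.e.}: inside the quotient, let $a$ be the product of $\bar p_k$ for $k<c$ with $k\in\tilde A$, and let $b$ be the product of $\bar p_k$ for $k<c$ with $k\notin\tilde A$. For a standard $n$, the statement $n\in C$ is equivalent to $\exists x\,(x\ctimes \bar p_n)\mathrel{E} a$, and $n\notin C$ is equivalent to $\exists y\,(y\ctimes \bar p_n)\mathrel{E} b$. Since $E$ is co-c.e.\ ($\Pi_1$) and $\ctimes$ is computable, each of these is a $\Sigma_2$ condition. Because the two conditions are complementary, each is simultaneously $\Sigma_2$ and $\Pi_2$, hence $\Delta_2$, hence computable from $0'$. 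Thus $C$ is a $0'$-computable separator of $A$ and $B$, contradicting the choice of $A$ and $B$.

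The main obstacle I expect is the honest justification of the agreement between the internal computation of $p_A^h$ on a standard input and the external computation of $p_A^{0'}$ at each standard stage. This is an overspill-style point: one argues by external induction on standard stages that the model's simulation of a single step of $p_A$ with oracle $h$ matches the real step with oracle $0'$, using that at each such step only finitely many, hence standard, oracle bits are queried and that $h$ coincides with $0'$ there. Once this is in place, the prime-product decoding and the $\Sigma_2/\Pi_2$ collapse to $\Delta_2$ are routine, and the theorem follows.
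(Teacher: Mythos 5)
Your proposal is correct and follows essentially the same route as the paper: the paper's proof of this theorem simply observes that $\Sigma_1$-soundness places $0'$ in the standard system and then defers to the preceding true-arithmetic argument, which already used nothing beyond the existence of the code $h$; your write-up unpacks exactly that argument (the $0'$-computably inseparable sets, the internal enumeration up to a nonstandard stage, the prime-product codes $a,b$, and the $\Sigma_2\wedge\Pi_2\Rightarrow\Delta_2$ collapse giving a $0'$-computable separator).
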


\begin{proof}
 If the model is $\Sigma_1$-sound, then it computes the halting problem correctly, and so $0'$ will be in the standard system of the model, which means that it has a code $h$ as in the proof above. That was all that was required in the previous argument, and so the same contradiction is achieved.
\end{proof}

\begin{corollary}
No nonstandard model of arithmetic in the language $\set{+,\cdot,0,1,<}$ and with $0'$ in its standard system has a computably enumerable quotient presentation by any equivalence relation, of any complexity.
\end{corollary}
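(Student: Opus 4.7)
The plan is to deduce this corollary directly from Theorem \ref{Theorem.No-0'-by-co-c.e.} by using Lemma \ref{Lemma.Automatic-c.e.-co-c.e.} to promote the c.e.~equivalence relation hypothesis into a co-c.e.~one. Specifically, suppose toward contradiction that $\<\N,\cplus,\ctimes,\clt>$ is a computably enumerable structure, meaning that $\cplus$ and $\ctimes$ are computable binary operations while $\clt$ is merely a c.e.~relation, and that $E$ is an equivalence relation of arbitrary complexity which is a congruence with respect to this structure, such that the quotient $\<\N,\cplus,\ctimes,\clt>/E$ is a nonstandard model of arithmetic in the language $\set{+,\cdot,0,1,<}$ whose standard system contains $0'$.

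Because arithmetic proves that $<$ is a strict linear order, the quotient relation $\clt/E$ is a strict linear order on $\N/E$. We therefore find ourselves precisely in the situation of Lemma \ref{Lemma.Automatic-c.e.-co-c.e.}(2): a congruence $E$ with respect to a c.e.~relation $\clt$ whose quotient is a strict linear order. The lemma concludes that $E$ is co-c.e.

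Having upgraded our information about $E$ in this way, we may now discard the order $\clt$ from the structure and consider just the reduct $\<\N,\cplus,\ctimes>$, which is a computable structure, together with the co-c.e.~congruence $E$. The quotient $\<\N,\cplus,\ctimes>/E$ is still a nonstandard model of arithmetic in the language $\{+,\cdot\}$ whose standard system contains $0'$, and this directly contradicts Theorem \ref{Theorem.No-0'-by-co-c.e.}. There is no genuine obstacle to overcome here; the lemma does the essential work of automatically restricting the complexity of $E$, and the theorem supplies the contradiction. The whole point of isolating Lemma \ref{Lemma.Automatic-c.e.-co-c.e.} was precisely to allow such transfers between the language with $<$ and the language without.
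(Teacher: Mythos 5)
Your proof is correct and takes essentially the same route as the paper: apply Lemma \ref{Lemma.Automatic-c.e.-co-c.e.}(2) to conclude that $E$ must be co-c.e., then pass to the reduct $\<\N,\cplus,\ctimes>$ and invoke Theorem \ref{Theorem.No-0'-by-co-c.e.} for the contradiction.
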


\begin{proof}
If $\<\N,\cplus,\ctimes,\bar0,\bar 1,\clt>/E$ is such a computably enumerable quotient presentation, then lemma \ref{Lemma.Automatic-c.e.-co-c.e.} shows that $E$ must be co-c.e., and so the situation is ruled out by theorem \ref{Theorem.No-0'-by-co-c.e.}.
\end{proof}

Note that containing $0'$ in the standard system is a strictly weaker property than being $\Sigma_1$-sound, since a simple compactness argument allows us to insert any particular set into the standard system of an elementary extension of any particular model of arithmetic.

Our results do not settle what might be considered the central case of the second conjecture, which remains open. We are inclined to expect a negative answer, whereas Khoussainov has conjectured a positive answer.

\begin{question}
 Is there a nonstandard model of \PA\ in the usual language of arithmetic $\singleton{+,\cdot,0,1,<}$ that has a computably enumerable quotient presentation by some co-c.e.~equivalence relation? Equivalently, is there a nonstandard model of \PA\ in that language with a computably enumerable quotient presentation by any equivalence relation, of any complexity?
\end{question}

The two versions of the question are equivalent by lemma \ref{Lemma.Automatic-c.e.-co-c.e.}, which shows that in the language with the strict order, the equivalence relation must in any case be co-c.e.

Let us now consider the analogous ideas for the models of set theory, rather than for the models of arithmetic. We take this next theorem to indicate how the program of computable quotient presentations has difficulties with purely relational structures.

\begin{theorem}\label{Theorem.ZFC-no-computable-quotients}
 No model of \ZFC\ has a computable quotient presentation. That is, there is no computable relation $\cin$ and equivalence relation $E$, a congruence with respect to $\cin$, for which the quotient $\<\N,\cin>/E$ is a model of \ZFC. Indeed, no such computable quotient is a model of \KP\ or even considerably weaker set theories.
\end{theorem}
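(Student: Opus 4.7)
I would combine extensionality with an adaptation of the prime-product argument from Theorem~\ref{Theorem.No-0'-by-co-c.e.}.

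The first observation is that extensionality in the quotient gives $x\mathrel E y \iff \forall z\,(z\cin x \iff z\cin y)$, so $E$ is automatically $\Pi_1^0$ since $\cin$ is computable. This is precisely the relational obstacle flagged earlier: once $\cin$ is fixed as computable, extensionality forces $E$ to be co-c.e., with no remaining freedom of choice.

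Next, I would extract the arithmetic from the model and run the prime-product coding. Fix a pre-quotient $\bar\omega$ with $[\bar\omega]=\omega$ in the quotient, and let $D=\{x:x\cin\bar\omega\}$, which is computable since $\cin$ is. Addition and multiplication on $\omega$ are $\Delta_0$-definable in set theory, so the ternary relations $\mathrm{Sum}(x,y,z)$ and $\mathrm{Prod}(x,y,z)$ on pre-quotient, asserting the quotient thinks $[z]=[x]+[y]$ or $[z]=[x]\cdot[y]$, are built from $\cin$ and $E$ by bounded operations and are themselves co-c.e. Even in weak fragments of \KP, $\Sigma_1$-collection together with $\Delta_0$-separation makes the set $0''$ available as an element of the model. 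Let $a$ be a pre-quotient representative of the model's $0''$, and let $b$ code the complement of $0''$ below a nonstandard bound via the prime-product scheme. Then for standard $n$, the assertion ``$\bar p_n \mid a$'' translates on pre-quotient to $\exists x\,\mathrm{Prod}(x,\bar p_n,a)$, which is $\Sigma_2^0$; by complementarity with ``$\bar p_n \mid b$'', the same predicate is also $\Pi_2^0$, hence $\Delta_2^0$ and computable from $0'$. In the $\omega$-standard case, the trace of $a$ is external $0''$, and since $0''\not\leq_T 0'$ this yields the required contradiction.

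The main obstacle will be the $\omega$-nonstandard case, since then the model's internal $0''$ on standard inputs may properly contain the external $0''$ and the direct Turing-reducibility comparison breaks. The plan for the remedy is to work entirely inside the extracted $\langle D,\mathrm{Sum},\mathrm{Prod}\rangle/E$, which is a nonstandard PA model with co-c.e.~operations, and adapt the standard-system argument sketched after Theorem~\ref{Theorem.c.e.}: the prime-product calculation forces every element of its standard system to be $\Delta_2^0$, and one then needs to exhibit a specific set forced into that standard system by the set-theoretic strength (for instance a coded copy of the halting problem together with its Turing jump) whose $\Delta_2^0$-computability is ruled out by $\Sigma_1^0$-absoluteness on the standard part of the model, yielding the final contradiction and establishing Theorem~\ref{Theorem.ZFC-no-computable-quotients}.
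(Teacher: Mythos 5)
There is a genuine gap in your proposal, and in fact the paper's argument is both simpler and more robust than the route you sketch. Your opening observation is correct and is mentioned in the paper: extensionality does force $E$ to be co-c.e.\ once $\cin$ is computable. But the subsequent steps have two serious problems.

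First, the claim that the pre-quotient relations $\mathrm{Sum}(x,y,z)$ and $\mathrm{Prod}(x,y,z)$ are co-c.e.\ because addition and multiplication are $\Delta_0$-definable in set theory is not justified. A bounded quantifier $\exists v\cin u$ in the pre-quotient is a genuine unbounded existential quantifier over $\N$, since the $\cin$-extension of a pre-quotient element can be infinite; likewise $\forall v\cin u$ is a genuine universal quantifier. Nesting these, together with occurrences of $E$ and its negation, pushes the complexity of a $\Delta_0$-defined relation up the arithmetic hierarchy according to its quantifier depth, not down to $\Pi^0_1$. So the later $\Sigma^0_2/\Pi^0_2$ bookkeeping for the divisibility relations does not go through.

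Second, and more fundamentally, your argument requires that $0''$ (or at least some fixed noncomputable set of known degree) actually lies in the standard system of the quotient model. This is a soundness assumption. The theorem is stated with no soundness hypothesis at all: the quotient could be $\Sigma_1$-unsound, could fail to compute the halting problem correctly on standard inputs, and could have a completely wild standard system. Your proposal essentially rediscovers the hypothesis of Theorem~\ref{Theorem.No-0'-by-co-c.e.}, which explicitly requires $0'$ in the standard system, and then tries to boost to $0''$; but nothing in $\ZFC$ or $\KP$ alone guarantees this. You flag the $\omega$-nonstandard case as the obstacle, but the proposed remedy is vague precisely at the point where soundness would be needed, so the gap remains.

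The paper's actual proof sidesteps both problems. It fixes pre-quotient codes $S$, $\Sing$, $\Doub$ for the successor relation and for the sets of natural-number singletons and doubletons, and then directly computes a map $n\mapsto\bar n$ producing representatives of the standard numbers, using only the computability of $\cin$ to decode Kuratowski pairs (the key step being to verify $\neg(m\cin x)$, which needs $\cin$ computable rather than merely c.e.). It then observes that every set in the standard system is of the form $\{n:\bar n\cin a\}$ and hence computable, contradicting the general fact that every standard system of a model of set theory (or arithmetic) contains noncomputable sets---a fact that holds with no soundness hypothesis, via the Scott-set closure properties (closure under complement and containing a branch through every coded infinite binary tree). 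This makes the paper's argument cleaner, hypothesis-free, and immediately uniform over ill-founded and unsound models, which is exactly what the theorem claims.
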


Just to emphasize, we do not assume anything about the complexity of the equivalence relation $E$, which can be arbitrary, or about whether the quotient model of set theory $\<\N,\cin>/E$ is well-founded or ill-founded, standard or non-standard. Note also the typographic distinction between the relation $\cin$, which is the computable relation of the pre-quotient structure $\<\N,\cin>$, and the ordinary set membership relation $\in$ of set theory.

\begin{proof}
Suppose toward contradiction that $\cin$ is a computable relation on $\N$ and that $E$ is an equivalence relation, a congruence with respect to $\cin$, for which the quotient $\<\N,\cin>/E$ is a model of set theory. We need very little strength in the set theory, and even an extremely weak set theory suffices for the argument. We shall use the Kuratowski definition of ordered pair in set theory, for which $\<x,y>=\singleton{\singleton{x},\singleton{x,y}}$.

Since set theory proves that the set of natural numbers exists, there is some $N\in\N$ that the quotient model thinks represents the set of all natural numbers. Also, this model thinks that various kinds of sets involving natural numbers exist, such as the set coding the successor relation
 $$S=\set{\<n,n+1>\mid n\in\N}.$$
To be clear, we mean that $S$ is a number in $\N$ that the quotient model $\<\N,\cin>/E$ thinks is the set of the successor relation we identify above. So the $\cin$-elements of $S$ will all be thought to be Kuratowski pairs of natural numbers in the model, and this could include nonstandard numbers if there are any.

Similarly, we have sets consisting of the natural number singletons and doubletons.
\begin{equation*}
  \begin{split}
     \Sing&=\set{\singleton{n}\mid n\in\N},\\
     \Doub&=\set{\singleton{n,m}\mid n\neq m\text{ in }\N}.\\
  \end{split}
\end{equation*}
To be clear, we mean that $\Sing$ and $\Doub$ are particular elements of $\N$ that in the quotient model $\<\N,\cin>/E$ are thought to be the sets defined by those set-theoretic expressions. We assume that our set theory proves that these sets exist.

Next, I claim that there is a computable function $n\mapsto\bar n$, such that $\bar n$ represents what the quotient model $\<\N,\cin>/E$ thinks is the standard natural number $n$. To see this, we may fix a number $\bar 0$ that represents the number $0$. Next, given $\bar n$, we search for an element $d\cin S$ that will represent the pair $\<\bar n,m>$, and when found, we set $\overline{n+1}=m$. How shall we recognize this $d$ and $m$ using only $\cin$? Well, the $d$ we want has the form $\set{\singleton{\bar n},\ \singleton{\bar n,m}}$ inside the model, and so we search for an element $d\cin S$ that has an element $x\cin d$ with $x\cin\Sing$ and $\bar n\cin x$. This $x$ must represent the set $\singleton{\bar n}$, since $x$ is thought to have only one element, since it is in $\Sing$. Having found $d$, we search for $y\cin d$ with $y\in\Doub$ and an element $m$ with $m\cin y$, but $\neg (m\cin x)$. In this case, it must be that $y$ represents $\singleton{\bar n,m}$, and so we may let $\overline{n+1}=m$ and proceed. So the map $n\mapsto\bar n$ is computable.

It follows that every set in the standard system of the model $\<\N,\cin>/E$ is computable. Specifically, if $a$ is any element of the model, then the trace of this object on the natural numbers is the set $\set{n\in\N\mid \bar n\cin a}$, which would be a computable set, since both $\cin$ and the map $n\mapsto\bar n$ is computable.

But we mentioned earlier that every model of set theory and indeed of arithmetic must have non-computable sets in its standard system, so this is a contradiction.
\end{proof}

We could have argued a little differently in the proof. Namely, if $\cin$ is a computable relation with a congruence $E$ and $\<\N,\cin>/E$ is a model of set theory, then by the axiom of extensionality, we have
    $$x\neq y\quad\iff\quad\exists z\ \neg(z\in x\iff z\in y).$$
In the pre-quotient model, this amounts to:
    $$\neg(x\mathrel{E} y)\quad\iff\quad \exists z\neg(z\cin x\iff z\cin y).$$
Thus, in the case that $\cin$ is computable, in analogy with lemma \ref{Lemma.Automatic-c.e.-co-c.e.} we may deduce from this that $E$ must be co-c.e., even though we had originally made no assumption on the complexity of $E$. And in this case, the theorem follows from the next result.

Theorem \ref{Theorem.ZFC-no-computable-quotients} shows that it is too much to ask for computable quotient presentations of models of set theory. So let us relax the computability requirement on the pre-quotient membership relation $\cin$ by considering the case of computably enumerable quotient presentations, where $\cin$ is merely c.e.~rather than computable. In this case, we can still settle the second conjecture by ruling out quotient presentations by co-c.e.~equivalence relations.

\begin{theorem}\label{Theorem.ZFC-c.e.-quotients}
 There is no c.e.~relation $\cin$ with a co-c.e.~equivalence relation $E$ respecting it for which $\<\N,\cin>/E$ is a model of set theory.
\end{theorem}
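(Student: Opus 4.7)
The plan is to adapt the proof of Theorem~\ref{Theorem.ZFC-no-computable-quotients} to the weaker setting where $\cin$ is c.e.~and $E$ is co-c.e., with the conclusion correspondingly weakened to: every set in the standard system of the quotient model $\<\N,\cin>/E$ is c.e. Closure of the standard system under complements will then upgrade this to ``computable,'' yielding the same Scott-theoretic contradiction as in the previous proof. As there, we work in a very weak fragment of set theory, and fix in advance pre-quotient representatives $\bar 0$, $N$, $S$, $\Sing$, $\Doub$ of the empty set, the set of natural numbers, the successor relation, and the sets of natural-number singletons and doubletons in the quotient.

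The central step is to verify that the computable map $n\mapsto \bar n$ of the previous proof still goes through. Given $\bar n$, I would dovetail a search for a tuple $(d,x,y,m)$ satisfying $d\cin S$, $x\cin d$, $x\cin\Sing$, $\bar n\cin x$, $y\cin d$, $y\cin\Doub$, $m\cin y$, and $\neg(m\mathrel{E}\bar n)$, then set $\overline{n+1}=m$. Under the new hypotheses each of these atomic conditions is c.e.: the $\cin$-conditions by the hypothesis on $\cin$, and the inequality $\neg(m\mathrel{E}\bar n)$ because $E$ is co-c.e. The Kuratowski-pair analysis from Theorem~\ref{Theorem.ZFC-no-computable-quotients} then shows that any such $m$ represents $n+1$: the singleton clause identifies $x/E$ with $\singleton{\bar n/E}$ and forces $d/E$ to be an ordered pair with first coordinate $\bar n/E$; the doubleton clause identifies $y/E$ with $\set{\bar n/E, m/E}$; the $\neg E$ clause separates $m/E$ from $\bar n/E$; and $d/E\in S/E$ then pins $m/E$ down as $\overline{n+1}/E$. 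The crucial substitution relative to the previous proof is that where it used $\neg(m\cin x)$, we must instead use $\neg(m\mathrel{E}\bar n)$, since under our weakened hypothesis $\neg\cin$ is only co-c.e.

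With $n\mapsto\bar n$ computable, the standard-system trace $\set{n\in\N\mid \bar n\cin a}$ of any pre-quotient element $a$ is c.e. The quotient model believes that $N\minus a$ exists for every $a$, and the trace of $N\minus a$ on the standard $\N$ is the complement of the trace of $a$, so the standard system is closed under complements. Every standard-system set is therefore both c.e.~and co-c.e., and hence computable. But as was used in the alternative proof of Theorem~\ref{Theorem.c.e.}, the standard system of any nonstandard model of a sufficient fragment of arithmetic is a Scott set, and so must contain non-computable elements, for instance a path through a computable infinite binary tree with no computable path. This gives the desired contradiction.

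The main obstacle is precisely the meeting point of the two hypotheses: without the co-c.e.~assumption on $E$, there would be no c.e.~way to perform the distinctness check $m\neq \bar n$ in the quotient that isolates the second Kuratowski coordinate, and the computable construction of $n\mapsto \bar n$ would collapse. In this sense ``$\cin$ c.e.''\ and ``$E$ co-c.e.''\ are exactly the complexities needed to keep every atomic check in the $\bar n$-search c.e., so that the dovetailing argument terminates on every input.
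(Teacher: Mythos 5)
Your proposal is correct and follows essentially the same route as the paper's own proof: both identify that the only use of full computability of $\cin$ in the proof of Theorem~\ref{Theorem.ZFC-no-computable-quotients} is the negative check $\neg(m\cin x)$, both observe that this check is really a disequality $m\neq\bar n$ in the quotient and so can be replaced by the c.e.~condition $\neg(m\mathrel{E}\bar n)$ when $E$ is co-c.e., and both then conclude that the map $n\mapsto\bar n$ is computable, hence every standard-system set is c.e., contradicting the Scott-set property of the standard system. Your version is a bit more explicit about dovetailing the c.e.~search for the tuple $(d,x,y,m)$ and about passing through ``c.e.~and closed under complements, hence computable,'' but these are cosmetic expansions of the same argument.
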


\begin{proof}
In the proof of theorem \ref{Theorem.ZFC-no-computable-quotients}, we had used the computability of $\cin$, as opposed to the computable enumerability of $\cin$, in the step where we needed to know $\neg (m\cin x)$. At that step of the proof, really what we needed to know was that $m$ and $\bar n$ were not representing the same object. But if $E$ is co-c.e., then we can learn that $\bar n\neq m$ simply by waiting to see that $\bar n\mathrel{E} m$ fails, which if true will happen at some finite stage since $E$ is co-c.e. Indeed, it is precisely with the co-c.e.~equivalence relations $E$ that one is entitled to know by some finite stage that two numbers represent different objects in the quotient. Therefore, if $E$ is co-c.e., we still get a computable map $n\mapsto \bar n$. And then, in the latter part of the proof, we would conclude that every set in the standard system is c.e., since the trace of any object $a$ in the model on the natural numbers is the set $\set{n\in\N\mid \bar n\cin a}$, which would be c.e. But every standard system must contain non-c.e.~sets, by the paths-through-trees argument, since it contains non-computable sets and it is a closed under complements. So again we achieve a contradiction.
\end{proof}

Let us now explore the analogues of the earlier theorems for nonstandard models of finite set theory. Let $\ZFfin$ denote the usual theory of finite set theory, which includes all the usual axioms of ZFC, but without the axiom of infinity, plus the negation of the axiom of infinity and plus the $\in$-induction scheme formulation of the the foundation axiom. This theory is true in the structure $\<\HF,\in>$ of hereditarily finite sets, and it is bi-interpretable with \PA\ via the Ackermann relation on natural numbers.\footnote{Some researchers have also considered another strictly weaker version of this theory, omitting the $\in$-induction scheme. But it turns out that this version of the theory is flawed for various reasons: it cannot prove that every set has a transitive closure; it is not bi-interpretable with \PA; it does not support the Tennenbaum phenomenon (see \cite{EnayatSchmerlVisser2011:OmegaModelsOfFiniteSetTheory}). Meanwhile, since all these issues are addressed by the more attractive and fruitful theory $\ZFfin$, we prefer to take this theory as the meaning of `finite set theory.'}

\begin{theorem}\label{Theorem.ZFCfin-no-computable-quotients}
 There is no computable relation $\cin$ and equivalence relation $E$, a congruence with respect to $\cin$, of any complexity, such that the quotient $\<\N,\cin>/E$ is a nonstandard model of finite set theory $\ZFfin$.
\end{theorem}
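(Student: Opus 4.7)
The plan is to adapt the proof of Theorem \ref{Theorem.ZFC-no-computable-quotients}. There, the computability of $\cin$, combined with pre-quotient codes for the infinite sets $N$, $S$, $\Sing$, and $\Doub$, was used to produce a computable map $n\mapsto\bar n$ and hence to show that every set in the standard system of the quotient would be computable. The obvious obstruction to copying that argument over to $\ZFfin$ is that finite set theory does not prove the existence of any infinite set, so one cannot hope to locate a single element of $\N$ coding the full successor relation on the natural numbers, nor the collection of all natural-number singletons or doubletons.

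This obstacle is overcome by exploiting the hypothesis that $M=\<\N,\cin>/E$ is \emph{nonstandard}. Since $\ZFfin$ is bi-interpretable with $\PA$ via the Ackermann relation, $M$ possesses a nonstandard (von Neumann) ordinal $c$, which $M$ internally regards as finite. Replacement in $\ZFfin$ then provides, for any such $c$, the bounded sets
\begin{equation*}
  S_c=\set{\<k,k+1>\mid k+1\in c},\quad \Sing_c=\set{\singleton{k}\mid k\in c},\quad \Doub_c=\set{\set{k,k'}\mid k,k'\in c,\ k\neq k'},
\end{equation*}
each finite from $M$'s internal perspective. I would fix particular pre-quotient representatives for these objects together with $\bar 0=\emptyset$. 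Because $c$ is nonstandard, $\bar n\cin c$ holds for every standard $n$, so the inductive step $\bar n\mapsto\overline{n+1}$ of the earlier proof applies verbatim with $S_c,\Sing_c,\Doub_c$ replacing $S,\Sing,\Doub$: one computably searches $\cin S_c$ for a Kuratowski pair whose first Kuratowski component is a singleton $\cin\Sing_c$ containing $\bar n$, and then extracts the other coordinate via a doubleton $\cin\Doub_c$. This yields a computable function $n\mapsto\bar n$.

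From here the argument mirrors the set-theoretic theorems already proved. For any $a\in\N$, the trace $\set{n\in\N\mid \bar n\cin a}$ is computable, so every element of the standard system of $M$ is computable. But $M$ interprets, via the Ackermann relation, a nonstandard model of $\PA$ whose standard system must contain a noncomputable set by the paths-through-computable-trees argument recalled in the alternative proof of Theorem \ref{Theorem.c.e.}, a contradiction. The step deserving the most care is the appeal to nonstandardness: one must verify that a nonstandard ordinal $c$ genuinely exists in $M$ (which is guaranteed by bi-interpretability with $\PA$, and is essentially why the $\in$-induction version of finite set theory is the right one), and that $\ZFfin$ is strong enough to form the bounded helper sets $S_c,\Sing_c,\Doub_c$. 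Once these ingredients are in place, the remainder is a routine echo of the proof of Theorem \ref{Theorem.ZFC-no-computable-quotients}.
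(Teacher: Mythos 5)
Your proposal is correct and follows essentially the same route as the paper's own proof: fix a nonstandard natural number of the model and use it to bound the helper sets $S$, $\Sing$, and $\Doub$, then run the decoding argument from the $\ZFC$ case to obtain a computable map $n\mapsto\bar n$ and conclude that the standard system consists only of computable sets, a contradiction. Your additional remarks about Replacement supplying the bounded sets and about $\bar n\cin c$ holding for all standard $n$ are correct elaborations that the paper leaves implicit.
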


\begin{proof}
Assume that $\cin$ is a computable relation for which $\<\N,\cin>/E$ is a nonstandard model of $\ZFfin$. The ordinals of this model with their usual arithmetic form a nonstandard model of \PA, which we may view as the natural numbers of the model. Let $N$ be a number representing a nonstandard such natural number in $\<\N,\cin>/E$. There is a set $S$ representing the set $\set{\<n,n+1>\mid n<N}$ as defined inside the model, and similarly we have sets representing the natural number singletons and doubletons up to $N$.
\begin{equation*}
  \begin{split}
     \Sing&=\set{\singleton{n}\mid n\in\N,\ n<N},\\
     \Doub&=\set{\singleton{n,m}\mid n\neq m\text{ in }\N,\ n,m<N}.\\
  \end{split}
\end{equation*}
So $\Sing$ and $\Doub$ are particular numbers in $\N$ that in the quotient $\<\N,\cin>/E$ represent the sets we have just defined by those expressions.

We may now run essentially the same argument as in the proof of theorem \ref{Theorem.ZFC-no-computable-quotients}. Namely, we may define a computable function $n\mapsto\bar n$, where $\bar n$ represents the natural number $n$ in the model $\<\N,\cin>/E$, by using the parameters $S$, $\Sing$ and $\Doub$ and decoding via the Kuratowski pair function as before. This argument uses the computability of $\cin$ as before in order to produce $\overline{n+1}$ from $\bar n$. Finally, we use this function to show that every set in the standard system of the model is computable, since for any object $a$, the trace of $a$ on the natural numbers is the set of $n$ for which $\bar n\cin a$, which is a computable property. This contradicts the fact that the standard system of any nonstandard model of $\ZFfin$ must include non-computable sets.
\end{proof}

Finally, we have the analogue of theorem \ref{Theorem.ZFC-c.e.-quotients} for the case of finite set theory.

\begin{theorem}
 There is no c.e.~relation $\cin$ with a co-c.e.~equivalence relation $E$ respecting it for which $\<\N,\cin>/E$ is a nonstandard model of finite set theory $\ZFfin$.
\end{theorem}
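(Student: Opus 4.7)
The plan is to combine the arguments of theorems \ref{Theorem.ZFC-c.e.-quotients} and \ref{Theorem.ZFCfin-no-computable-quotients}: reuse the nonstandard-ordinal setup from the latter (so that we still have a large enough internal $N$ and coded sets $S$, $\Sing$, $\Doub$ bounded by $N$ to run a Kuratowski-pair decoding), and relax the computability of $\cin$ to mere c.e.~by exploiting the co-c.e.~nature of $E$ exactly as in the former.

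Concretely, I would fix a nonstandard model $\<\N,\cin>/E$ of $\ZFfin$, pick a nonstandard element $N$ of the internal ordinals (which with their internal arithmetic form a nonstandard model of \PA), and let $S$, $\Sing$, $\Doub$ be elements of $\N$ that represent, respectively, the successor relation on $\{n<N\}$, the singletons $\{n\}$ for $n<N$, and the doubletons $\{n,m\}$ for $n\neq m<N$, just as in the proof of theorem \ref{Theorem.ZFCfin-no-computable-quotients}. I then define $n\mapsto\bar n$ by recursion: having $\bar n$, search for $d$ with $d\cin S$ that has some $x\cin d$ with $x\cin\Sing$ and $\bar n\cin x$, and some $y\cin d$ with $y\cin\Doub$ and some $m\cin y$ with $\neg(m\mathrel{E}\bar n)$; then set $\overline{n+1}=m$. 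The step where theorem \ref{Theorem.ZFCfin-no-computable-quotients} tested $\neg(m\cin x)$ to ensure distinctness from $\bar n$ is precisely the step that used decidability of $\cin$; here, following theorem \ref{Theorem.ZFC-c.e.-quotients}, I replace it by directly detecting $\neg(m\mathrel{E}\bar n)$, which is c.e.~because $E$ is co-c.e. The whole search thus becomes a purely c.e.~predicate, and by the internal axioms of pairing it must succeed, so $n\mapsto\bar n$ is total and computable.

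With this map in hand, the trace of any $a\in\N$ on the standard naturals is $\set{n\in\N\mid \bar n\cin a}$, which is c.e.~because $\cin$ is c.e.~and $n\mapsto\bar n$ is computable. Hence every set in the standard system of the quotient model would be c.e. But since $\ZFfin$ is bi-interpretable with \PA, the standard system of any nonstandard model of $\ZFfin$ is a Scott set and, by the paths-through-trees argument, must contain non-c.e.~sets, yielding the desired contradiction. The only delicate point is the decoding modification; once one observes that co-c.e.~$E$ is exactly what is needed to certify inequality at a finite stage, everything else is a direct transcription of the earlier proofs.
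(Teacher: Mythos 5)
Your proposal is correct and follows the paper's own proof essentially verbatim: the paper likewise combines the nonstandard-ordinal setup from the $\ZFfin$ theorem with the observation (from the ZFC c.e.\ case) that co-c.e.\ $E$ lets one certify $\neg(m\mathrel{E}\bar n)$ at a finite stage, thereby keeping $n\mapsto\bar n$ computable when $\cin$ is merely c.e. The resulting conclusion that every standard-system set would be c.e.\ and the appeal to the Scott-set/paths-through-trees fact are exactly as in the paper.
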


\begin{proof}
This theorem is related to theorem \ref{Theorem.ZFCfin-no-computable-quotients} the same way that theorem \ref{Theorem.ZFC-c.e.-quotients} is related to theorem \ref{Theorem.ZFC-no-computable-quotients}. Namely, in the proof of theorem \ref{Theorem.ZFCfin-no-computable-quotients}, we used the computability of the membership relation $\cin$ in the step computing the function $n\mapsto\bar n$. If $\cin$ is merely computably enumerable, as here, then we can nevertheless still find a computable function $n\mapsto\bar n$, provided that the equivalence relation $E$ is co-c.e., since in the details of the proof as explained in theorem \ref{Theorem.ZFC-c.e.-quotients}, we needed to know that we had found the right value for $\overline{n+1}$ by knowing that a certain number $m$ was actually representing a different number than $\bar n$, and it is precisely with a co-c.e.~equivalence relation $E$ that one can know such a thing at some finite stage.

If $n\mapsto\bar n$ is computable, then with a c.e.~relation $\cin$, we can deduce that every set in the standard system is c.e., since $a$ codes the set of $n$ for which $\bar n\cin a$, a c.e.~property, and this contradicts the fact that every standard system of a nonstandard model of $\ZFfin$ must contain non-c.e.~sets.
\end{proof}

We expect to follow up this article with a second article containing several more refined results.

\bibliographystyle{alpha}
\bibliography{MathBiblio,HamkinsBiblio,WebPosts}

\end{document}